\documentclass[11pt]{amsart}
\usepackage{amssymb}
\usepackage{amsmath}
\usepackage{tikz}
\usepackage{hyperref}
\usepackage[capitalize]{cleveref}
\usepackage{geometry}
\usepackage[spacing=true,kerning=true,babel=true,tracking=true]{microtype}
\usepackage[foot]{amsaddr}

\hypersetup{
	pdfstartview={XYZ null null 1.00}, 
	pdfpagemode=UseNone, 
	colorlinks,
	breaklinks, 
	linkcolor=blue,
	urlcolor=blue, 
	anchorcolor=blue,
	citecolor=blue
}

\newtheorem{theorem}{Theorem}
\newtheorem{lemma}[theorem]{Lemma}

\newtheorem{cor}[theorem]{Corollary}

\theoremstyle{definition}

\newtheorem{question}[theorem]{Question}

\crefname{question}{Question}{Questions}
\crefname{cor}{Corollary}{Corollaries}

\newcommand{\N}{\mathbb{N}}

\title{Randomized Borel $(2d+1)$-coloring of digraphs}

\author{Anton~Bernshteyn}
\address{\normalfont (AB) Department of Mathematics, University of California, Los Angeles, CA, USA}
\email{bernshteyn@math.ucla.edu}

\author{Edward Hou}
\address{\normalfont (EH) Department of Mathematics, California Institute of Technology, Pasadena, CA, USA}
\email{ehou@caltech.edu}

\author{Forte Shinko}
\address{\normalfont (FS) Department of Mathematics, University of California, San Diego, CA, USA}
\email{fshinko@ucsd.edu}

\author{Felix Weilacher}
\address{\normalfont (FW) Department of Mathematics, University of California, Berkeley, CA, USA}
\email{weilacher@berkeley.edu}

\thanks{AB's research is partially supported by the NSF CAREER grant DMS-2528522 and the Sloan Research Fellowship.
FW's research is supported by NSF grant DMS-2402064. }

\begin{document}

\begin{abstract}
Let $G$ be a Borel digraph with maximum out-degree $d \in \N$. 
We show that $G$ admits a random Borel $(2d+1)$-coloring for which every edge is almost surely not monochromatic. This gives a simpler proof of a recent result of Pelayo-G\'omez: such a graph $G$ admits a measurable proper $(2d+1)$-coloring with respect to any Borel probability measure on $V(G)$. 
Our proof is an adaptation of Pelayo-G\'omez's proof to the randomized Borel setting. 
\end{abstract}

\maketitle

\vspace{-12 pt}

\noindent Let $d \in \N$, and $G$ an (irreflexive) digraph where every vertex has out-degree at most $d$. A standard observation is that the underlying simple graph of $G$ is then $2d$-degenerate, and hence it admits a proper $(2d+1)$-coloring. This is optimal since the complete graph $K_{2d+1}$ has a balanced orientation.

In their seminal paper, Kechris, Solecki, and Todor\v{c}evi\'c considered the Borel combinatorics of Borel digraphs maximum out-degree $d \in \N$ (equivalently, graphs generated by $d$ Borel functions).

\begin{theorem}[Kechris--Solecki--Todor\v{c}evi\'c \cite{kst}]\label{thm:kst_main}
    Let $G$ be a Borel digraph with maximum out-degree 1.
    Then $\chi_B(G) \in \{0,1,2,3,\aleph_0\}$.  
\end{theorem}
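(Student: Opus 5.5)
The plan is to show that if $\chi_B(G)$ is finite then $\chi_B(G)\le 3$. Since $\chi_B(G)$ is always either finite or $\aleph_0$, this leaves only the values $0,1,2,3,\aleph_0$. The value $\aleph_0$ always bounds the chromatic number from above: a countable Borel proper coloring exists for any locally countable Borel graph (Kechris--Solecki--Todor\v{c}evi\'c). So it suffices to assume a Borel proper $k$-coloring $c\colon V(G)\to k$ with $k$ finite, and produce a Borel proper $3$-coloring.

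Let $f$ be the partial Borel function with $G=\{(x,f(x))\}$. Step one is to extend $f$ to a total function by letting it fix the vertices with out-degree $0$. This is harmless, since such self-loops are not edges of the underlying graph. Step two is to reduce the number of colors one at a time, from $k$ down to $3$, by the greedy argument. Suppose $k>3$ and consider the Borel set $A=c^{-1}(k-1)$. It is independent, and each $x\in A$ has neighbours only among $f(x)$ and $f^{-1}(x)$. The set $f^{-1}(x)$ may be infinite, so a naive greedy recoloring fails.

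This is the main obstacle, and the fix is to exploit out-degree $1$ more carefully. I would recolor in two rounds. In the first round, each $x\in A$ picks a color in $\{0,1,2\}$ avoiding $c(f(x))$, using the minimal such color. Conflicts can now occur only between $x\in A$ and some $y\in f^{-1}(x)$ with $c(y)$ equal to the new color of $x$. Since $c(y)$ is at most $k-2$, a fixed finite palette is available. The cleaner route is therefore the standard KST argument. First use a Borel proper $\aleph_0$-coloring to obtain a Borel maximal independent set for $G^{(n)}$, meaning points at distance at most $n$ in the graph of $f$. Combine this with the Borel $k$-coloring to handle the fact that forward orbits $x,f(x),f^2(x),\dots$ branch backwards. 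Then color forward along the function in alternating fashion, giving each vertex a color distinct from that of its image. Each vertex has a unique forward constraint, so "color $x$ depending on $f(x)$" is well-founded once a Borel set $S$ is chosen that meets every forward orbit cofinally, with a uniform bound on gaps. On $S$ we use colors from a fixed scheme, and off $S$ we color backwards from $S$. Conflicts arise only at the finitely many backward steps, and three colors suffice because each vertex must avoid its image and the vertex placed right before it in the chain.

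Concretely, the construction has three parts. First, choose a Borel set $S$ that is $G^{(2)}$-independent and meets every forward orbit infinitely often with gaps at most $N$; this uses a Borel maximal $G^{(N)}$-independent set, which exists via the $\aleph_0$-coloring. Second, for $x\notin S$, let $n(x)$ be the least $n$ with $f^n(x)\in S$, and color points of $S$ with $0$. Third, color the remaining points by the induction on $n(x)$ described above, alternating $1,2$ and using a third color to fix parity at the junctions. Orbits never meeting $S$ cost countably many colors, and they are exactly where finiteness of $\chi_B$ is used. The Borel $k$-coloring rules out the infinite-$\chi_B$ obstruction, via the KST result that aperiodic components with $\chi_B<\aleph_0$ admit such an $S$.
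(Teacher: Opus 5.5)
\textbf{There is a genuine gap at the core of your argument.} The paper only quotes this theorem from Kechris--Solecki--Todor\v{c}evi\'c and gives no proof, so your argument has to stand on its own.

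Your reduction is correct. Suppose $S$ is a Borel $G$-independent set such that every forward orbit $x, f(x), f^2(x), \dots$ meets $S$. Color $S$ by $0$, and color each $x \notin S$ by the parity of the least $n \geq 1$ with $f^n(x) \in S$. This is a Borel proper $3$-coloring, and it needs neither $G^{(2)}$-independence nor recurrence with bounded gaps.

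The problem is that producing $S$ is the whole content of the theorem, and you never do it. A maximal $G^{(N)}$-independent set only guarantees that each vertex is within undirected distance $N$ of $S$. Since vertices may have many in-neighbours, $S$ can lie entirely on backward branches hanging off a forward ray and never meet that ray. No argument that uses only a countable coloring can work here. The shift graph on $[\N]^{\N}$ given by $A \mapsto A \setminus \{\min A\}$ is locally finite, so Borel maximal $G^{(N)}$-independent sets exist. Yet it has $\chi_B = \aleph_0$ by Galvin--Prikry, so it admits no such $S$. Your closing appeal to ``the KST result that components with $\chi_B < \aleph_0$ admit such an $S$'' is circular. Apart from that appeal, the finite coloring $c$ is never used.

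\textbf{The missing idea is an induction on $k$ that uses $c$ directly.} Let $X_1 = \{x : c(f^n(x)) \neq k-1 \text{ for all } n \geq 0\}$. Then $X_1$ is Borel, $f(X_1) \subseteq X_1$, and $c$ takes only $k-1$ values on $X_1$. Applying the induction hypothesis to $f|X_1$ (the case $k=0$ is trivial) gives a Borel $G$-independent $B' \subseteq X_1$ meeting every forward orbit that starts in $X_1$. Set $B = B' \cup \bigl(c^{-1}(k-1) \setminus f^{-1}(B')\bigr)$.
\begin{itemize}
\item $B$ is independent. An edge from $u \in B'$ to $f(u)$ cannot land in $c^{-1}(k-1)$, because $f(u) \in X_1$. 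The edges from $v$ to $f(v) \in B'$ were removed by subtracting $f^{-1}(B')$.
\item $B$ meets every forward orbit. If $x \notin X_1$ and $c(f^n(x)) = k-1$, then either $f^n(x) \in B$ or $f^{n+1}(x) \in B'$.
\end{itemize}

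\textbf{A smaller point.} $G$ need not be locally countable, since in-degrees can be uncountable. So the bound $\chi_B(G) \leq \aleph_0$ should not be cited from the locally countable case. Instead, fix a countable Borel family $(U_n)$ such that whenever $x \neq y$ there is $n$ with $x \in U_n$ and $y \notin U_n$. Color each non-fixed $x$ by the least $n$ with $x \in U_n$ and $f(x) \notin U_n$, and give fixed points one extra color.
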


\begin{cor}[Kechris--Solecki--Todor\v{c}evi\'c \cite{kst}]\label{cor:kst}
    Let $G$ be a Borel digraph with maximum out-degree $d \in \N$.
    Then $\chi_B(G) \in \{0,\ldots,3^d, \aleph_0\}$. 
\end{cor}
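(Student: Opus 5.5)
We derive the corollary from \cref{thm:kst_main} by splitting $G$ into $d$ Borel digraphs of out-degree at most $1$, coloring each with at most $3$ colors, and taking the product coloring.

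\textbf{Splitting the edges.} Let $G$ be a Borel digraph on a standard Borel space $X$ with maximum out-degree $d$. For each $x$ the out-neighborhood $N^+(x)$ has at most $d$ elements. By the Lusin--Novikov uniformization theorem (the sections of the edge set are finite, hence countable), there are Borel partial functions $f_1,\dots,f_d$ on $X$ such that
\[ N^+(x)=\{f_i(x): i\le d,\ x\in\operatorname{dom}(f_i)\}. \]
Concretely, fix a Borel linear order on $X$ and let $f_i(x)$ be the $i$-th out-neighbor of $x$ in that order, when it exists. The sets $\{x : |N^+(x)|\ge i\}$ and the graphs of the $f_i$ are Borel by Lusin--Novikov. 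Now let $G_i$ be the Borel digraph with edges $x\to f_i(x)$. Each $G_i$ has maximum out-degree at most $1$, and every edge of $G$ lies in some $G_i$.

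\textbf{Coloring each piece.} Apply \cref{thm:kst_main} to each $G_i$. Suppose $\chi_B(G_i)$ is finite for every $i$. Then $\chi_B(G_i)\le 3$, so there are Borel proper colorings $c_i\colon X\to\{0,1,2\}$ of $G_i$. The product map $c=(c_1,\dots,c_d)\colon X\to\{0,1,2\}^d$ is Borel. It is proper for $G$: if $x\to y$ is an edge of $G$, then $y=f_i(x)$ for some $i$, so $c_i(x)\ne c_i(y)$ and hence $c(x)\ne c(y)$. Thus $\chi_B(G)\le 3^d$.

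\textbf{The dichotomy.} Every Borel graph on a standard Borel space satisfies $\chi_B\le\aleph_0$, since a countably infinite Borel coloring always exists for locally countable Borel graphs (this is also standard in \cite{kst}). So if $\chi_B(G)$ is infinite, it equals $\aleph_0$. It remains to rule out an infinite $\chi_B(G)$ arising when all the $\chi_B(G_i)$ are finite, and this is exactly what the product argument does: if every $\chi_B(G_i)$ is finite then $\chi_B(G)\le 3^d$. Note that $\chi_B(G_i)=\aleph_0$ for some $i$ does not force $\chi_B(G)=\aleph_0$; the statement does not need this. Since $\chi_B(G)\le\aleph_0$ always holds, either $\chi_B(G)=\aleph_0$ or $\chi_B(G)$ is finite. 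Finiteness of $\chi_B(G)$ implies finiteness of each $\chi_B(G_i)$, because $G_i\subseteq G$, and hence $\chi_B(G)\le 3^d$. Therefore $\chi_B(G)\in\{0,\dots,3^d,\aleph_0\}$.

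The only step needing care is the Borel measurability of the splitting into the $f_i$, which Lusin--Novikov handles. Everything else is formal.
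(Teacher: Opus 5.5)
The paper states this corollary without proof, citing \cite{kst}. Your Lusin--Novikov splitting followed by a product of $3$-colorings is the standard derivation from \cref{thm:kst_main}, and your argument that a finite $\chi_B(G)$ is at most $3^d$ is correct. The gap is in the other half of the statement, $\chi_B(G)\le\aleph_0$, which the paper actually uses: the lemma takes a Borel proper coloring $V(G)\to\N$ from \cref{cor:kst}.

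Your justification for this half does not work. The claim that every Borel graph on a standard Borel space has $\chi_B\le\aleph_0$ is false: the complete graph on $\mathbb{R}$ is Borel and needs continuum many colors. The fallback to locally countable graphs does not apply either, because bounded out-degree says nothing about in-degree, so $G$ need not be locally countable. For example, the digraph on $\mathbb{R}$ with edges $x\to 0$ for all $x\neq 0$ has out-degree $1$, yet $0$ has uncountably many neighbors.

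The fix uses your own machinery. \cref{thm:kst_main} gives $\chi_B(G_i)\le\aleph_0$ for every $i$, including the case where it is infinite. So choose Borel proper colorings $c_i\colon X\to\N$ of the $G_i$. Then $(c_1,\dots,c_d)\colon X\to\N^d$ is a Borel proper coloring of $G$ with countably many colors, by exactly the argument you gave for $\{0,1,2\}^d$.

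Separately, your aside that $\chi_B(G_i)=\aleph_0$ does not force $\chi_B(G)=\aleph_0$ is wrong. By the monotonicity $G_i\subseteq G$ that you invoke in the very next sentence, $\chi_B(G)\ge\chi_B(G_i)$, and together with $\chi_B(G)\le\aleph_0$ this forces equality. Nothing in your proof depends on the aside, so it is harmless, but you should remove it.
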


Miller showed that, modulo a meager or null set, these graphs always have finite Borel colorings:

\begin{theorem}[Miller \cite{miller_measurable}]\label{thm:miller}
    Let $G$ be a Borel digraph with maximum out-degree $d \in \N$. 
    \begin{itemize}
        \item For any Borel probability measure $\mu$ on $V(G)$, $\chi_\mu(G) \leq 3^d$. 
        \item For any compatible Polish topology on $V(G)$,
    $\chi_{BM}(G) \leq 3^d$.
    \end{itemize} 
\end{theorem}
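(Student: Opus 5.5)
The strategy is to reduce to the out-degree $1$ case coordinatewise, as in \cref{cor:kst}, and then handle a single Borel function modulo a null or meager set. Since $G$ has maximum out-degree $d$, the Lusin--Novikov uniformization theorem writes the edge set as a union of graphs of $d$ partial Borel functions $f_1,\dots,f_d$. Let $G_i$ be the digraph $\{(x,f_i(x))\}$, which has out-degree at most $1$. If $c_i$ is a $3$-coloring of $G_i$ that is proper off a $\mu$-null (respectively meager) $G$-invariant set $N_i$, then $c=(c_1,\dots,c_d)$ is a $3^d$-coloring of $G$ that is proper off $\bigcup_i N_i$. That union is again null (respectively meager) and invariant, and we may color it using any countable Borel coloring (for instance, the one obtained from \cref{thm:kst_main}).

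It therefore suffices to treat $d=1$, where $G$ is the graph of a Borel function $f$ and we may saturate sets under the equivalence relation generated by $f$. The plan is to split $X$ into three invariant Borel pieces:
\begin{itemize}
\item the periodic part,
\item the part where the forward orbits are aperiodic but eventually enter a set on which $f$ fails to be countable-to-one in a suitable sense,
\item the main part.
\end{itemize}
The periodic part consists of finite cycles with trees hanging off them. Choosing a Borel transversal of the cycles, we can Borel $3$-color it: color each cycle with $3$ colors, then color the trees by distance parity, using the third color only when needed. This gives a Borel $3$-coloring. On the aperiodic part we follow Miller's approach. Using a decreasing sequence of Borel vanishing markers $M_n$, with $\bigcap_n M_n=\emptyset$, chosen to be $f$-marker sets (for example, sets that every forward orbit meets and whose consecutive points along forward orbits are far apart), we color each forward orbit segment between consecutive marker points by alternating two colors. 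We reserve the third color to repair parity conflicts at the marker points, and we propagate the coloring backward to points not on the segments by alternating relative to the image.

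The main obstacle is that the markers vanish, so points lying between markers at every level $n$ might never receive a well-defined color. This is exactly where the measure (or category) enters. For $\mu$, we push forward: consider the measure $\nu=\sum_k 2^{-k} f^k_*\mu$, which is quasi-invariant enough for $f$. We then choose the markers $M_n$ with $\nu(M_n)\to 0$ fast enough that Borel--Cantelli shows $\nu$-almost every point lies in only finitely many $M_n$. On the set of points whose forward orbit eventually stabilizes the coloring, the coloring is defined and proper. The remaining set is $\nu$-null, and hence $\mu$-null along with its $G$-saturation. I expect the hardest step to be controlling the saturation under backward images, since $f^{-1}$ can be uncountable-to-one. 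To handle this, I would work only with forward saturations and define the exceptional set as the set of $x$ for which some $f^k(x)$ is bad; this set is null because $\nu$ dominates each $f^k_*\mu$. The Baire category case is analogous: we choose the $M_n$ to be nowhere dense after passing to a comeager invariant set on which $f$ is continuous, and we replace Borel--Cantelli by the Baire category theorem.
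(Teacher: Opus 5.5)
Your reduction to $d=1$ (Lusin--Novikov plus product colorings) and your Borel $3$-coloring of the eventually periodic part are fine. The product step needs no invariance, though, and you should not color the exceptional set with a countable Borel coloring, since that breaks the $3^d$ bound. Just intersect the $d$ conull, resp.\ comeager, Borel sets on which the $c_i$ are proper.

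\textbf{The genuine gap is in the aperiodic part.} The markers you posit cannot exist as Borel sets in general. Already a single Borel set $M$ that meets every forward orbit and satisfies $x\in M\Rightarrow f(x)\notin M$ yields a Borel $3$-coloring: give $M$ color $2$, and give each $x\notin M$ the parity of $\min\{n : f^n(x)\in M\}$. This contradicts the Kechris--Solecki--Todor\v{c}evi\'c theorem that the shift graph on $[\N]^\N$, which is aperiodic, has $\chi_B=\aleph_0$. So producing such an $M$ on a conull (resp.\ comeager) set is essentially the entire content of the case $d=1$, and your proposal assumes it rather than proving it. An equivalent task is producing a finite Borel coloring $c$ on a forward-invariant conull set, since then $\{x : c(x)=\liminf_n c(f^n(x))\}$ is such an $M$.

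This also shows two further things. First, the vanishing sequence is a red herring, since one such $M$ suffices. Second, the measure never enters substantively. As the $M_n$ decrease to $\emptyset$, every point lies in only finitely many of them, so your Borel--Cantelli step is vacuous. It gives no reason why the set of points whose colors fail to stabilize should be $\nu$-null. Your observation that $\bigcup_k f^{-k}(B)$ is $\mu$-null whenever $B$ is $\nu$-null is correct and is the right way to control saturations. But it must be combined with an actual construction of $M$ on a $\nu$-conull set. The ``middle'' piece of your decomposition is never defined or used.

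The category half has an additional problem: $f^{-1}$ need not send meager sets to meager sets. So there is no category analog of $\nu$, a comeager invariant set on which $f$ is continuous need not exist, and ``replace Borel--Cantelli by the Baire category theorem'' is not an argument. For comparison, the paper does not prove this theorem at all; it cites Miller. Its own method does recover the measure half, with a better bound and no markers: \cref{thm:main} together with Fubini's theorem gives a $\mu$-conull Borel set $A$ with $\chi_B(G|A)\le 2d+1\le 3^d$. The Baire-category half is not covered by that argument, because the randomized construction only guarantees that each edge is properly colored for almost every $\omega$, not for comeager many.
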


Kechris, Solecki, and Todor\v{c}evi\'c asked whether the $3^d$ in their result could be replaced by the classical bound of $2d+1$. 
Kechris and Marks asked the same question 
for \cref{thm:miller}.

\begin{question}[Kechris--Solecki--Todor\v{c}evi\'c \cite{kst}]
    Let $G$ be a Borel digraph with maximum out-degree $d \in \N$. Is $\chi_B(G) \in \{0,\ldots,2d+1,\aleph_0\}$?
\end{question}

\begin{question}[Kechris--Marks \cite{kechris.marks}]\label{q:km}
    Let $G$ be a Borel graph with maximum out-degree $d \in \N$. Do we have $\chi_\mu(G) \leq 2d+1$ for any Borel probability measure $\mu$ on $G$?
    Do we have $\chi_{BM}(G) \leq 2d+1$ for any compatible Polish topology on $G$?
\end{question}

Note that \cref{thm:kst_main} gives a positive answer for $d = 1$. In the Borel setting,
Palamourdas improved the $3^d$ to $O(d^2)$ and gave a positive answer for $d = 2$ \cite{palamourdas}. 
Recently, Pelayo-G\'omez gave a complete positive answer to the mod-null part of \cref{q:km}.

\begin{theorem}[Pelayo-G\'omez \cite{pelayo-gomez}]\label{thm:pg_main}
    Let $G$ be a Borel graph with maximum out-degree $d \in \N$, and $\mu$ a Borel probability measure on $V(G)$. Then $\chi_\mu(G) \leq 2d+1$. 
\end{theorem}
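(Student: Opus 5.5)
The plan is to deduce \cref{thm:pg_main} from the randomized statement announced in the abstract. First, we construct a random Borel $(2d+1)$-coloring of $G$ for which every edge is almost surely not monochromatic. Then we average against $\mu$ to extract a single measurable proper coloring.

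For the deduction, let $(\Omega,\mathbb{P})$ be a standard probability space and $c\colon \Omega\times V(G)\to\{1,\ldots,2d+1\}$ a Borel map such that for each edge $(x,y)$ we have $\mathbb{P}(c(\omega,x)=c(\omega,y))=0$. Fix $\mu$ and put $\nu=\sum_{i\le d}(f_i)_*\mu+\mu$, where $G$ is generated by Borel partial functions $f_1,\ldots,f_d$ (Lusin--Novikov). Let $B$ be the set of pairs $(\omega,x)$ such that some edge at $x$ is monochromatic, meaning $c(\omega,x)=c(\omega,f_i(x))$ for some $i$. Then $B$ is Borel. By Fubini, $(\mathbb{P}\times\mu)(B)=0$, because each section $B^x$ is $\mathbb{P}$-null. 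Hence for $\mathbb{P}$-a.e.\ $\omega$, the set $B_\omega$ is $\mu$-null. Fixing such an $\omega$, the map $c(\omega,\cdot)$ is a Borel coloring that is proper off the $\mu$-null set $B_\omega$. Enlarging to a $G$-invariant Borel null set, which is possible since $G$ has countable components and $\mu$-saturations of null sets stay null after we pass to a quasi-invariant measure dominating $\mu$, gives $\chi_\mu(G)\le 2d+1$.

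For the random coloring itself, we mimic the finite greedy argument. In the finite case, orient out-edges, note $2d$-degeneracy, and color greedily. In the Borel setting, we first apply \cref{cor:kst}, or \cref{thm:miller}, to get a Borel proper coloring with finitely many colors, $3^d$ of them. The goal is then to reduce these colors to $2d+1$ one class at a time. When we recolor a class $A$ (an independent set), each $x\in A$ has at most $2d$ neighbors: $d$ out-neighbors, and the in-neighbors within the currently relevant structure. The difficulty is that in-degree is unbounded, so naive recoloring fails. Following Pelayo-G\'omez, we would use randomness as follows. Each vertex independently picks a uniform random real label. We orient the ``responsibility'' for resolving each edge toward the endpoint that processes later. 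We then argue that each vertex has only finitely many, at most $d$, constraints imposed from earlier vertices along out-edges, plus at most $d$ from its in-neighbors that chose lower labels and are forced to be relevant. The random labels are what make this in-neighbor set almost surely controlled on any fixed edge.

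The main obstacle is ensuring that the process terminates with every vertex colored while keeping each fixed edge proper with probability one. Unbounded in-degrees and infinite forward $f$-orbits could cause chains of dependencies that never resolve. We would attack this by showing that for a fixed edge $(x,f_i(x))$, the color decisions of its endpoints depend almost surely on only finitely many labels in a neighborhood. This needs a Borel--Cantelli style estimate: the probability that a dependency chain of length $n$ with decreasing labels exists decays like $d^n/n!$. Granting this, the coloring is Borel in $(\omega,x)$ and proper on every edge almost surely.
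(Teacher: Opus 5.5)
Your Fubini reduction is the same step the paper compresses into ``by Fubini's theorem'', and it is essentially right, apart from an order-of-quantifiers slip. $B_\omega$ is only $\mu$-null, and its $G$-saturation need not be $\mu$-null. You should run the Fubini argument from the start with a quasi-invariant $\tilde\mu$ dominating $\mu$, choose $\omega$ with $B_\omega$ $\tilde\mu$-null, and only then saturate. Your $\nu$ does not serve this purpose: it is not quasi-invariant and is never used. Also, \cref{cor:kst} does not give a $3^d$-coloring in general, since $\aleph_0$ is allowed; it gives a countable Borel proper coloring, and \cref{thm:miller} gives $3^d$ colors only $\mu$-a.e.

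The genuine gap is that you never construct the random Borel $(2d+1)$-coloring, which is the whole content of the result, and the sketch cannot work. The claim that each $x$ in a color class has at most $2d$ neighbors is false. The proposed fix fails for the same reason: if $v$ has infinitely many in-neighbors with i.i.d.\ uniform labels, then almost surely infinitely many of them have smaller label than $v$. So nothing limits the ``relevant'' lower-labelled in-neighbors to $d$, and their colors can use up all $2d+1$ colors. Likewise, a $d^n/n!$ bound on decreasing-label chains needs at most $d^n$ candidate chains of length $n$. That holds for chains along out-edges only. Any scheme in which a vertex reacts to its in-neighbors has chains with unbounded, even infinite, branching, so the Borel--Cantelli estimate fails exactly where you need it. ``Granting this'' grants the entire difficulty.

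The missing idea is to make each vertex responsible only for its out-edges, and to replace a one-pass greedy coloring by an iterated random local recoloring. Call $x$ sad if it shares a color with an out-neighbor; a coloring is proper exactly when no vertex is sad. Take a countable Borel proper coloring $f$ and coin flips $y\in 2^\N$. The set $I_y$ of those $x$ with $y(f(x))=1$ and $y(f(x'))=0$ for all out-neighbors $x'$ is independent. After thinning, it contains each $x$ with probability exactly $2^{-d-1}$. At each stage, every sad $x\in I_y$ recolors to a uniformly random one of the first $d+1$ colors missing from its out-neighbors. Since $I_y$ is independent, its out-neighbors are not recolored, so $x$ becomes happy. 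A vertex can be sad at the next stage only in two ways. Either it was sad and not selected, contributing a factor $1-2^{-d-1}$. Or one of its at most $d$ out-neighbors was sad, was selected, and picked its color, contributing a factor $\frac{2^{-d-1}}{d+1}$. This gives $\mathbb{P}[x\text{ sad for }c_n]\le\alpha^n$ with $\alpha=1-\frac{2^{-d-1}}{d+1}$. Borel--Cantelli then shows that almost surely $x$ and its out-neighbors stabilize with $x$ happy. In-degrees never enter this recursion, and that is exactly what your approach lacks.
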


The purpose of this note is to give a short proof of the following ``randomized Borel'' version of \cref{thm:pg_main}. It implies \cref{thm:pg_main} by Fubini's theorem. 

\begin{theorem}\label{thm:main}
    Let $G$ be a Borel graph with maximum out-degree $d \in \N$. There is a Borel probability space $(\Omega,\mu)$ and a Borel map $c:\Omega \times V(G) \to 2d+1$ such that for each $(x,y) \in G$, for $\mu$-almost every $\omega \in \Omega$, $c_\omega(x) \neq c_\omega(y)$. 
\end{theorem}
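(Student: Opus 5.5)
The plan is to imitate the finite proof that $G$ is $2d$-degenerate, replacing ``pick a vertex of degree at most $2d$'' with a random, locally defined peeling procedure. First fix the randomness. By \cref{cor:kst} applied to a suitable finite power of $G$ (again a Borel digraph of bounded out-degree), there is a Borel map $\kappa: V(G) \to \N$ that is injective on every set of vertices at bounded distance along out-edges. Let $\Omega = [0,1]^{\N \times \N}$ with the product of Lebesgue measures, and for $x \in V(G)$ and a round $n \in \N$ let $r_n(x) = \omega(n,\kappa(x))$. Then, for any finite configuration of vertices relevant to one edge, the labels are independent and uniform, and $(\omega,x) \mapsto r_n(x)$ is Borel. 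All later choices will be Borel functions of finitely many labels in a bounded out-neighbourhood, together with previously defined Borel data, so $c$ will be Borel on $\Omega \times V(G)$.

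Next build the elimination order. Each vertex $x$ has at most $d$ out-neighbours. So if $x$ is coloured after all of its in-neighbours that are still present, and it must avoid at most $d$ colours from out-neighbours plus at most $d$ from in-neighbours, then $2d+1$ colours suffice. I will aim for the reverse of a degeneracy order. In a current induced subgraph $H$ (initially $G$), the vertices of $H$-in-degree at most $d$ are declared \emph{removable}. Among removable vertices, round $n$ removes those whose label $r_n$ is smaller than the labels of all removable neighbours within distance $1$. This yields an independent set removed at stage $n$, and we then pass to the induced subgraph on what remains. Each removed vertex has at most $2d$ neighbours remaining at its removal time. At the end we colour greedily in the reverse of the removal order. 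Colouring a removed vertex only requires the colours of its at most $2d$ later-removed neighbours, so the dependency is through bounded-out-degree-type data.

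The key probabilistic input is that, for a fixed edge $(x,y)$, almost surely both endpoints are removed at some finite stage, and the dependency chain determining $c_\omega(x)$ and $c_\omega(y)$ is finite. For finiteness of chains I would use the standard estimate for out-directed paths. There are at most $d^k$ out-paths of length $k$ from $x$, and each has monotone labels with probability at most $1/k!$. Since $\sum_k d^k/k! < \infty$, Borel--Cantelli rules out infinite monotone out-chains almost surely.

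The main obstacle I expect is showing that every vertex almost surely eventually becomes removable, i.e.\ that the surviving set shrinks to nothing along each edge. In-degrees are unbounded, and the averaging argument (average in-degree equals average out-degree, which is at most $d$) is only available via mass transport with respect to a measure, which we do not have in the randomized Borel setting. My first attempt is a local version of this counting: a vertex $x$ that is never removed must have more than $d$ in-neighbours that also survive forever, which produces an infinite backward-branching tree of survivors. I would then use independence of the labels across rounds, together with the bound on out-degree, to show that the probability that a fixed finite out-ball remains entirely unremoved after $n$ rounds tends to $0$. This is the step where I expect to need the genuine idea from Pelayo-G\'omez's argument, adapted so that it refers only to the labels near a single edge rather than to a global measure.
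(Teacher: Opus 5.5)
\paragraph{The gap.} The gap is at the step you flag yourself, and it is fatal rather than technical: the claim that every vertex almost surely becomes removable is false. Your labels only decide the order in which currently removable vertices are taken. They do not affect which vertices ever become removable, and there are Borel digraphs of out-degree at most $d$ in which none ever does.

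For $d \ge 1$, take the non-eventually-periodic points of $(2d)^\N$ with edges $x \to \sigma(x)$, where $\sigma$ is the one-sided shift. Every vertex has out-degree $1 \le d$ and exactly $2d > d$ in-neighbours (the sequences $ix$). So the removable set is empty at every round, and every vertex survives forever with probability $1$. In fact the underlying graph is $(2d+1)$-regular, so no peeling that removes only vertices with at most $2d$ surviving neighbours can even begin. The low-degree vertex in the finite argument comes from double counting edges, and this has no infinite analogue.

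There is also a secondary problem. Even when peeling does exhaust the graph, colouring ``in reverse of the removal order'' requires chains of adjacent vertices removed at later and later rounds to be finite. Your $\sum_k d^k/k!$ bound for monotone labels along out-paths does not control such chains, since they use in-edges and fresh labels at each round.

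\paragraph{What the paper does instead.} The paper abandons orderings altogether and makes each vertex answer only to its out-neighbours.
\begin{enumerate}
\item From a countable Borel proper colouring $f$ and $y \in 2^\N$, it forms the $G$-independent set $I_y$ of those $x$ with $y(f(x))=1$ and $y(f(x'))=0$ for every out-neighbour $x'$. After shrinking, each vertex lies in $I_y$ with probability exactly $2^{-d-1}$.
\item It starts from the constant colouring. At stage $n$, each \emph{sad} vertex (one sharing its colour with an out-neighbour) that lies in an independently sampled $I_y$ switches to the $i$-th colour not used by its out-neighbours, with $i$ uniform in $\{0,\dots,d\}$.
\item A recoloured vertex becomes happy, because $I_y$ is independent.
\item At least $d+1$ colours are free, so a recolouring out-neighbour of $x$ lands on $x$'s colour with probability at most $\frac{1}{d+1}$.
\item A union bound over the at most $d$ out-neighbours gives $\mathbb{P}[x \text{ sad for } c_n] \le \alpha^n$ with $\alpha = 1 - \frac{2^{-d-1}}{d+1}$. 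Borel--Cantelli then gives almost sure stabilisation to a colouring that is proper at each edge.
\end{enumerate}
Unbounded in-degree never enters, because a conflict on an edge is charged only to its tail, which has at most $d$ out-neighbours to worry about.
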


The main idea of our proof was adapted from the main idea in \cite{pelayo-gomez}. 
We first need the following. 
It says that $G$ admits a Borel fractional $2^{d+1}$-coloring.

\begin{lemma}
    Let $G$ be a Borel graph with maximum out-degree $d \in \N$. There is a Borel set $I \subseteq 2^\N \times V(G)$
    such that
    \begin{enumerate}
        \item For each $y \in 2^\N$, $I_y$ is $G$-independent.
        \item For each $x \in V(G)$, $I^x$ has coin-flip measure exactly $2^{-d-1}$. 
    \end{enumerate}
\end{lemma}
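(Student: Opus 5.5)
The idea is a random ``local maximum''-type rule driven by a countable Borel coloring. I take $x \in I_y$ exactly when $x$ flips heads and every out-neighbor of $x$ flips tails. The randomness is attached to colors rather than to vertices, so that it can be read off Borel-ly from a single point $y \in 2^\N$.

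\textbf{Step 1: a Borel coloring of the square graph.} By Lusin--Novikov, the out-neighbor map $x \mapsto \{f_1(x),\dots,f_{k(x)}(x)\}$ (with $k(x) \le d$ the out-degree) can be written using Borel partial functions $f_1,\dots,f_d$. Let $H$ be the graph on $V(G)$ in which distinct $x,z$ are adjacent if they are at distance at most $2$ in the underlying simple graph of $G$. Then $H$ is a Borel graph. It is locally countable, since in-degrees in $G$ may be countable but are never uncountable, the graph being generated by countably many Borel functions. Kechris--Solecki--Todor\v{c}evi\'c gives a Borel proper coloring $c\colon V(G)\to\N$ of $H$. In particular, for every $x$ the colors $c(x), c(f_1(x)),\dots,c(f_{k(x)}(x))$ are pairwise distinct, because any two of these vertices are within distance $2$ of each other.

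\textbf{Step 2: the random set.} Split the coordinates of $y\in 2^\N$ by a fixed bijection $\N \cong \N\times\{0,1,\dots,d\}$. This yields, for each color $n$, independent fair bits $Z_n(y)$ and $W_n(y,1),\dots,W_n(y,d)$. Define $(y,x)\in I$ if and only if all three of the following hold:
\begin{itemize}
\item[(a)] $Z_{c(x)}(y)=1$;
\item[(b)] $Z_{c(f_i(x))}(y)=0$ for all $i\le k(x)$;
\item[(c)] $W_{c(x)}(y,j)=1$ for all $j\le d-k(x)$.
\end{itemize}
This is a countable Boolean combination of Borel conditions, so $I$ is Borel.

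\textbf{Step 3: verification.} For independence, suppose $x\to z$ is an edge, so $z=f_i(x)$ for some $i$. If $x\in I_y$, then condition (b) gives $Z_{c(z)}(y)=0$. But $z\in I_y$ would require $Z_{c(z)}(y)=1$ by condition (a). Hence no edge lies inside $I_y$.

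For the measure, fix $x$. The bits appearing in the definition are:
\begin{itemize}
\item[(i)] $Z_{c(x)}$;
\item[(ii)] the bits $Z_{c(f_i(x))}$ for $i \le k(x)$, which are $k(x)$ distinct coordinates because the colors are distinct;
\item[(iii)] the $d-k(x)$ bits $W_{c(x)}(\cdot,j)$.
\end{itemize}
These are $1+k(x)+(d-k(x))=d+1$ distinct independent fair coordinates. Each is required to take one specific value, so $I^x$ has measure exactly $2^{-d-1}$.

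The main point needing care is Step 1. We need a \emph{countable Borel} proper coloring of the distance-$\le 2$ graph, which requires that graph to be Borel and locally countable. If only a proper coloring of $G$ itself were used, two out-neighbors of $x$ could share a color. The relevant $Z$-bits would then coincide, which preserves independence but spoils the exact probability. The padding bits $W$ handle vertices whose out-degree is less than $d$.
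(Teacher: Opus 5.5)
\textbf{Gap in Step 1.} The claim that in-degrees in $G$ are at most countable is false. Lusin--Novikov applies to the finite out-sections and gives you $f_1,\dots,f_d$, but nothing forces these functions to be countable-to-one.

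For instance, the digraph $\{(x,0) : x \in \mathbb{R}\setminus\{0\}\}$ on $\mathbb{R}$ has out-degree at most $1$. Yet any two vertices are within distance $2$ through $0$, so your $H$ is a complete graph on continuum many vertices and has no countable proper coloring at all. In general $H$ need not even be Borel, since ``having a common in-neighbor'' is only an analytic condition.

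Worse, for $d \geq 2$ even the weaker property you actually use cannot be achieved with countably many colors. That property is that $c(x),c(f_1(x)),\dots,c(f_{k(x)}(x))$ are pairwise distinct. Take $V=\mathbb{R}\sqcup\{(a,b)\in\mathbb{R}^2 : a\neq b\}$ and let each pair $(a,b)$ send edges to $a$ and $b$. Any such $c$ must be injective on $\mathbb{R}$. So the exact measure computation in Step 3 does not go through in general.

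\textbf{The repair.} Your own padding trick already contains the fix, and the worry in your last paragraph is unfounded. Take $c$ to be merely a countable Borel proper coloring of $G$; this exists by \cref{cor:kst}, with no local countability needed. In condition (c), replace $k(x)$ by $k'(x)=\lvert\{c(f_i(x)) : i\le k(x)\}\rvert$, which is a Borel function of $x$. Then:
\begin{enumerate}
\item coinciding $Z$-bits all impose the same requirement ``$=0$'';
\item $Z_{c(x)}$ is distinct from them because $c$ is proper on $G$;
\item the $W$-bits are separate coordinates.
\end{enumerate}
So exactly $1+k'(x)+(d-k'(x))=d+1$ distinct coordinates are constrained, and $I^x$ has measure exactly $2^{-d-1}$. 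Independence is unaffected.

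This repaired version is the paper's argument. The paper defines $I_y$ by your (a) and (b) using a proper coloring of $G$. It observes that $I^x$ then has measure $2^{-k-1}$, where $k\le d$ is the number of distinct colors on the out-neighbors of $x$, and then passes to a subset. Your padding bits are a clean, explicit way to carry out that last step, which the paper leaves unexplained.
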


\begin{proof}
    We prove this with ``exactly $2^{-d-1}$'' replaced by ``at least $2^{-d-1}$.'' We can then get ``exactly'' by passing to a subset. 
    By \cref{cor:kst}, let $f : V(G) \to \N$ be a Borel proper coloring of $G$.
    let \[I_y = \{x \in V(G) \mid \text{$y(f(x)) = 1$ and for all out-neighbors $x'$ of $x$, $y(f(x'))=0$} \}.\] 
    This is $G$-independent by construction, and for each $x \in V(G)$, $I^x$ has measure $2^{-k-1}$, where $k \leq d$ is the number of distinct colors held by $x$'s out-neighbors. 
\end{proof}

\begin{proof}[Proof of \cref{thm:main}]
    We will take $\Omega = ((d+1) \times 2^\N)^\N$ and $\mu$ the product of the uniform measure on $(d+1)$ and the coin-flip measure on $2^\N$. 
    We will inductively define random functions $c_n : V(G) \to 2d+1$ for $n \in \N$, where $c_0$ will be the constant $0$ function, and $c_{n+1}$ will depend uniformly on $c_n$ and the $n$-th coordinate of our random $\omega \in \Omega$. 
    We will then set $c = c_\omega = \lim_n c_n$, or $0$ if the limit does not exist. Call a vertex \textbf{sad} for a function $V(G) \to 2d+1$ if it has the same color as one of its out-neighbors, and \textbf{happy} otherwise.
    We will maintain that for each $n \in \N$ and $x \in V(G)$, 
    \[ \mathbb{P}[\text{$x$ is sad for $c_n$}] \,\leq\, \alpha^n,\]
    where $\alpha = 1 - \frac{2^{-d-1}}{d+1} < 1$.

    Given $c_n$ and $\omega(n) = (i,y) \in (d+1) \times 2^\N$, we define $c_{n+1}$ as follows.
    Let $I$ be the set from the lemma. 
    If $x$ is sad for $c_n$ and $x \in I_y$, let $c_{n+1}(x)$ be the $i$-th color not held by any of $x$'s out-neighbors. Note this is well defined since there are $2d+1$ colors and at most $d$ out-neighbors. 
    In this case let us say $x$ was \textbf{recolored at stage $n$}.
    Otherwise let $c_{n+1}(x) = c_n(x)$. 

    Now let us bound the probability of sadness for $c_{n+1}$. 
    By the independence of $I_y$, if $x$ is recolored at stage $n$, it is happy for $c_{n+1}$. Therefore, there are only two ways for $x$ to be sad for $c_{n+1}$:
    \begin{itemize}
        \item $x$ is sad for $c_n$ and not recolored, i.e., $x \not\in I_y$, or
        \item one of $x$'s out-neighbors is recolored at stage $n$ and chooses $c_n(x)$ as its new color. 
    \end{itemize}
    The former happens with probability $\mathbb{P}[\text{$x$ is sad for $c_n$}] \cdot (1-2^{-d-1})$. 
    For each out-neighbor $x'$ of $x$, the latter happens for this out-neighbor with probability at most
    $\mathbb{P}[\text{$x'$ is sad for $c_n$}] \cdot \frac{2^{-d-1}}{d+1}$. 
    Taking a union bound, we conclude
    \[ \mathbb{P}[\text{$x$ is sad for $c_{n+1}$}] \,\leq\, \alpha^n\left(1-2^{-d-1}\right) + \alpha^n \frac{d 2^{-d-1}}{d+1} \,=\, \alpha^n \left(1 - \frac{2^{-d-1}}{d+1}\right) \,=\, \alpha^{n+1}.\]

    It remains to show that each $x \in V(G)$ is a.s. happy for the limiting coloring $c$. 
    By the Borel--Cantelli lemma, $x$ is a.s. sad for only finitely many $c_n$. 
    Since $x$ is recolored only when it is sad, we have $c(x) = \lim_n c_n(x)$ a.s. The same is true for each of $x$'s out-neighbors. Thus, a.s., there is some $n \in \N$ for which $c= c_n$ on $x$ and all of its out-neighbors and $x$ is happy for $c_n$. Then $x$ is happy for $c$, as desired.
\end{proof}

\noindent\textbf{AI Disclosure:} An LLM was used in the production of this note only to check for typos. 

\medskip

\noindent\textbf{Funding:} This material is based upon work partially supported by the Alfred P. Sloan Foundation and the National Science Foundation under grants DMS-2528522 and DMS-2402064. Any opinions, findings, and conclusions or recommendations expressed in this
material are those of the authors and do not necessarily reflect the views of the funding agencies.

\vspace{-5pt}

\bibliographystyle{amsalpha}
\bibliography{references}

\end{document}